\numberwithin{equation}{section}
\newtheorem{thm}{Theorem}
\newtheorem{lem}[thm]{Lemma}
\theoremstyle{definition}
\theoremstyle{remark}
\newtheorem{rem}[thm]{Remark}
\DeclareMathOperator{\Ad}{Ad}
\newcommand{\C}{\mathbb{C}}
\newcommand{\R}{\mathbb{R}}
\newcommand{\Perv}{\mathrm{Perv}}
\newcommand{\Gm}{\mathbb{G}_{\mathit{m}}}
\newcommand{\HC}{\mathrm{HC}}
\newcommand{\pr}{\mathrm{pr}}
\title{A remark on the geometric Jacquet functor}
\subjclass[2010]{22E46, 14F05}
\author{Noriyuki Abe}
\author{Yoichi Mieda}
\address[Noriyuki Abe]{Graduate School of Mathematical Sciences, The University of Tokyo, 3-8-1 Komaba, Meguro, Tokyo, 153-8914 Japan}
\email{abenori@ms.u-tokyo.ac.jp}
\address[Yoichi Mieda]{Graduate School of Mathematics, Kyushu University, 744 Motooka, Nishi-ku, Fukuoka, 819-0395 Japan}
\email{mieda@math.kyushu-u.ac.jp}
\begin{document}
\begin{abstract}
We give an action of $N$ on the geometric Jacquet functor defined by Emerton-Nadler-Vilonen.
\end{abstract}

\maketitle

Let $G_\R$ be a reductive linear algebraic group over $\R$, $G_\R = K_\R A_\R N_\R$ an Iwasawa decomposition, and $M_\R$ the centralizer of $A_\R$ in $K_\R$.
Then $P_\R = M_\R A_\R N_\R$ is a Langlands decomposition of a minimal parabolic subgroup.
We use lower-case fraktur letters to denote the corresponding Lie algebras and omit the subscript ``$\R$'' to denote complexifications.
Fix a Cartan involution $\theta$ such that $K = \{g\in G\mid \theta(g) = g\}$.
For a $(\mathfrak{g},K)$-module $V$, the Jacquet module $J(V)$ of $V$ is defined by the space of $\mathfrak{n}$-finite vectors in $\varprojlim_{k\to\infty}V/\theta(\mathfrak{n})^kV$~\cite{MR562655}.

For simplicity, assume that $V$ has the same infinitesimal character as the trivial representation.
Denote the category of Harish-Chandra modules with the same infinitesimal characters as the trivial representation by $\HC_\rho$.
Let $X$ be the flag variety of $G$, $\Perv_K(X)$ the category of $K$-equivariant perverse sheaves on $X$.
By the Beilinson-Bernstein correspondence and the Riemann-Hilbert correspondence, we have the localization functor $\Delta\colon \HC_\rho\to\Perv_K(X)$.
Emerton-Nadler-Vilonen gave a geometric description of $J(V)$ by the following way~\cite{MR2096674}.
Fix a cocharacter $\nu\colon\Gm\to A$ which is positive on the roots in $\mathfrak{n}$.
Define $a\colon \Gm\times X\to X$ by $a(t,x) = \nu(t)x$.
Consider the following diagram,
\[
	X\simeq \{0\}\times X\to \mathbb{A}^1\times X\leftarrow \Gm\times X\xrightarrow{a} X.
\]
Let $R\psi$ be the nearby cycle functor with respect to $\mathbb{A}^1\times X \to \mathbb{A}^1$.
Then the geometric Jacquet functor $\Psi$ is defined by 
\[
	\Psi(\mathscr{F}) = R\psi(a^*(\mathscr{F})).
\]
\begin{thm}[Emerton-Nadler-Vilonen~{\cite[Theorem~1.1]{MR2096674}}]
We have $\Delta\circ J\simeq \Psi\circ \Delta\colon \HC_\rho\to \Perv_K(X)$.
\end{thm}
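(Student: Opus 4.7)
The plan is to pass to the D-module side via the Riemann--Hilbert correspondence. Then $\Delta$ becomes Beilinson--Bernstein localization and $R\psi$ is computed as the D-module nearby cycles functor through the Kashiwara--Malgrange V-filtration along $t=0$. It suffices to identify global sections on both sides as Harish-Chandra modules.

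Write $\mathcal{M} = \Delta(V)$. Since $a$ is the $\nu$-action, the pullback $a^{\ast}\mathcal{M}$ on $\Gm \times X$ has global sections $\mathcal{O}(\Gm) \otimes V$ with the $\mathfrak{g}$-action twisted so that a root vector of $\mathfrak{a}$-weight $\alpha$ acts as $t^{-\langle \nu,\alpha\rangle}$ times its original action. Elements of $\theta(\mathfrak{n})$ thus acquire strictly positive powers of $t$, so they extend regularly across $t=0$, while elements of $\mathfrak{n}$ acquire strictly negative powers. Crucially, $a^{\ast}\mathcal{M}$ is monodromic along $\Gm$, which lets me compute the nearby cycles explicitly by taking the $t\partial_t$-locally finite part of the natural formal completion in $t$. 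On global sections this completion matches the $\theta(\mathfrak{n})$-adic completion of $V$, and the $t\partial_t$-finiteness condition matches $\mathfrak{n}$-finiteness, since $t\partial_t$ acts by the $\nu$-weight and $\mathfrak{n}$ strictly raises it.

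Combining these one obtains a natural $\mathfrak{g}$-equivariant isomorphism $\Gamma\bigl(X,\Psi(\Delta(V))\bigr) \simeq J(V)$; the $K$-equivariance is automatic because although $K$ does not commute with $\nu(t)$, the commutator lies in $M_\R \subset K_\R$ and becomes trivial after specializing $t \to 0$. The main obstacle will be the rigorous comparison of the D-module V-filtration on $a^{\ast}\mathcal{M}$ with the algebraic $\theta(\mathfrak{n})$-adic filtration on $V$, and the verification that the $\mathfrak{n}$-finite vectors in the completion are picked out by nearby cycles rather than by the full formal completion. The $\Gm$-monodromic structure is the key tool here, as it reduces the computation of $\psi$ to linear algebra of $t\partial_t$-weights and bypasses the general nearby-cycles machinery.
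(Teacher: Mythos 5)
This theorem is not proved in the present paper at all: it is quoted from Emerton--Nadler--Vilonen, so the only meaningful comparison is with their published argument. Your overall plan --- pass to the D-module side, try to compute $\Gamma(X,R\psi(a^*\mathcal{M}))$, compare a $V$-filtration/lattice along $t=0$ with the $\theta(\mathfrak{n})$-adic filtration on $V$, and match a weight-finiteness condition with $\mathfrak{n}$-finiteness via Casselman's description of $J(V)$ --- is indeed the shape of their proof. But as written your sketch has genuine gaps at exactly the points where the work is.

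First, the claim that $a^*\mathcal{M}$ is monodromic along $\Gm$ is false in general, and it is the load-bearing step of your computation. Untwisting by the isomorphism $(t,x)\mapsto(t,\nu(t)x)$ identifies $a^*\mathcal{M}$ with $\mathcal{O}_{\Gm}\boxtimes\mathcal{M}$ and carries $t\partial_t$ to $t\partial_t+\xi_{H_\nu}$, where $H_\nu=d\nu(1)\in\mathfrak{a}$ and $\xi_{H_\nu}$ is the induced vector field on $X$; so local finiteness of $t\partial_t$ on $a^*\mathcal{M}$ amounts to $\mathcal{M}$ being monodromic for the $\Gm$-action through $\nu$, i.e.\ to local finiteness of the $\mathfrak{a}$-direction --- which fails for general Harish-Chandra modules (already for $G_\R=SL_2(\R)$, the delta module at a point of $X=\mathbb{P}^1$ not fixed by $A$, corresponding to a discrete series, is not $\nu$-monodromic). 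What is true is that $a^*\mathcal{M}$ is strongly equivariant for the twisted action $s\cdot(t,x)=(st,\nu(s)^{-1}x)$ --- the same structure exploited in Lemma~\ref{lem:limit of action} --- but that does not reduce $R\psi$ to taking $t\partial_t$-finite vectors in the $t$-adic completion; one must actually construct a coherent extension/lattice over $\mathbb{A}^1$ (a Kashiwara--Malgrange $V$-filtration) and prove it induces the $\theta(\mathfrak{n})$-adic topology on $V$, which is the heart of the ENV argument, not a formality you can bypass. Second, you compute nearby cycles ``on global sections'' without justifying that $\Gamma(X,-)$ commutes with $R\psi$; since the direct image of $a^*\mathcal{M}$ to $\mathbb{A}^1$ is an infinite-rank connection, this requires the compatibility of nearby cycles with the proper projection $\mathbb{A}^1\times X\to\mathbb{A}^1$ together with exactness of $\Gamma$ at the regular dominant parameter $\rho$. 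Third, your equivariance paragraph is not correct: $\Ad(\nu(t)^{-1})(K)\neq K$, the commutators do not lie in $M_\R$, and the limit of $\Ad(\nu(t)^{-1})(K)$ as $t\to 0$ is $MN$, not $K$ (that is precisely the point of the present paper); correspondingly $J(V)$ is not a $(\mathfrak{g},K)$-module, and the isomorphism to be established is one of $\mathfrak{g}$-modules with infinitesimal character $\rho$ (equivalently of D-modules via Beilinson--Bernstein), so ``$K$-equivariance is automatic'' is not even the right statement to aim for.
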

It is easy to see that $J(V)$ is a $(\mathfrak{g},N)$-module for a $(\mathfrak{g},K)$-module $V$.
Hence $\Psi(\mathscr{F})$ is $N$-equivariant for $\mathscr{F}\in \Perv_K(X)$. (See also \cite[Remark~1.3]{MR2096674}.)

In this paper, we give the action of $N$ on $\Psi(\mathscr{F})$ in a geometric way.
Roughly speaking, this action is given by the ``limit'' of the action of $K$.

We use the following lemma.
\begin{lem}\label{lem:limit of action}
Let $\mathcal{X}$ be a scheme of finite type over $\mathbb{A}^1$, $\mathcal{X}^0$ (resp.~$\mathcal{X}_0$) the inverse image of $\Gm$ (resp.~$\{0\}$), and $\mathcal{G}$ a smooth group scheme over $\mathbb{A}^1$.
If $\mathscr{F}^0$ is a $\mathcal{G}^0$-equivariant perverse sheaf on $\mathcal{X}^0$, then $R\psi(\mathscr{F}^0)$ is $\mathcal{G}_0$-equivariant.
\end{lem}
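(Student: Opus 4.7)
I would deduce this from the compatibility of the nearby cycles functor with smooth morphisms over the base $\mathbb{A}^1$. Let $\tilde a, \tilde p \colon \mathcal{G}\times_{\mathbb{A}^1}\mathcal{X}\to \mathcal{X}$ denote the action and second projection over $\mathbb{A}^1$; both are smooth of the same relative dimension $d=\dim(\mathcal{G}/\mathbb{A}^1)$, since $\tilde p$ is the base change of the smooth morphism $\mathcal{G}\to\mathbb{A}^1$ and the automorphism $(g,x)\mapsto(g,gx)$ of $\mathcal{G}\times_{\mathbb{A}^1}\mathcal{X}$ identifies $\tilde a$ with $\tilde p$. The $\mathcal{G}^0$-equivariant structure on $\mathscr{F}^0$ is, by definition, an isomorphism $\alpha\colon \tilde a^{0,*}\mathscr{F}^0 \xrightarrow{\sim}\tilde p^{0,*}\mathscr{F}^0$ on $\mathcal{G}^0\times_{\Gm}\mathcal{X}^0$ satisfying the usual cocycle condition and a normalization along the identity section.

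\textbf{Key steps.} The central tool is the canonical isomorphism $f_0^{*}\circ R\psi\simeq R\psi\circ f^{0,*}$ associated to any smooth morphism $f\colon\mathcal{Y}\to\mathcal{X}$ of $\mathbb{A}^1$-schemes (smooth base change for nearby cycles). Applied to $\tilde a$ and $\tilde p$, the isomorphism $\alpha$ transports to an isomorphism
\[
\tilde a_0^{*}R\psi(\mathscr{F}^0)\simeq R\psi(\tilde a^{0,*}\mathscr{F}^0)\xrightarrow{R\psi(\alpha)} R\psi(\tilde p^{0,*}\mathscr{F}^0)\simeq \tilde p_0^{*}R\psi(\mathscr{F}^0)
\]
on $\mathcal{G}_0\times\mathcal{X}_0$, which is the candidate $\mathcal{G}_0$-equivariance datum for $R\psi(\mathscr{F}^0)$. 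To finish, one verifies the cocycle identity on $\mathcal{G}_0\times\mathcal{G}_0\times\mathcal{X}_0$ and the unit normalization by pulling back the corresponding identities for $\alpha$ through the analogous smooth compatibility isomorphisms for the several smooth action and projection maps out of $\mathcal{G}\times_{\mathbb{A}^1}\mathcal{G}\times_{\mathbb{A}^1}\mathcal{X}$.

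\textbf{Main obstacle.} The delicate point is not the construction of the equivariance isomorphism but its functoriality: one needs the smooth base change isomorphism for $R\psi$ to be compatible with composition of smooth morphisms, so that the cocycle identity for $\alpha$ actually descends to the desired identity for the transported isomorphism. This amounts to invoking the fact that smooth base change for nearby cycles is a morphism of pseudofunctors, after which the cocycle verification reduces to a diagram chase. The remaining bookkeeping—choosing conventions so that everything lies in the appropriate derived category and keeping track of the shift $[d]$ needed to preserve perversity—is routine.
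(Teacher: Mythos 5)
Your proposal is correct and follows essentially the same route as the paper: pull back the equivariance isomorphism along the smooth action and projection maps over $\mathbb{A}^1$, apply $R\psi$, and use the smooth base change compatibility of nearby cycles to obtain $m_0^*R\psi(\mathscr{F}^0)\simeq \pr_2^*R\psi(\mathscr{F}^0)$. Your additional care about the cocycle and unit conditions goes beyond the paper, which works with the isomorphism $m^*\mathscr{F}^0\simeq\pr_2^*\mathscr{F}^0$ alone as the equivariance datum.
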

\begin{proof}
Define $m\colon \mathcal{G}\times_{\mathbb{A}^1}\mathcal{X}\to\mathcal{X}$ by $m(g,x) = gx$.
Then $m$ is a smooth morphism.
Let $m^0\colon \mathcal{G}^0\times_{\Gm}\mathcal{X}^0\to \mathcal{X}^0$ and $m_0\colon \mathcal{G}_0\times \mathcal{X}_0\to \mathcal{X}_0$ be its restrictions.
Since $\mathscr{F}^0$ is $\mathcal{G}^0$-equivariant, we have an isomorphism $(m^0)^*(\mathscr{F}^0)\simeq \pr_2^*(\mathscr{F}^0)$.
Hence we have $R\psi (m^0)^*(\mathscr{F}^0)\simeq R\psi\pr_2^*(\mathscr{F}^0)$.
By the smooth base change theorem, $m_0^*R\psi(\mathscr{F}^0)\simeq \pr_2^*R\psi(\mathscr{F}^0)$~\cite[Expos\'e XIII]{MR0354657}.
Hence $R\psi(\mathscr{F}^0)$ is $\mathcal{G}_0$-equivariant.
\end{proof}

Set
\[
	\mathcal{K}^0 = \{(t,k)\in \Gm\times G\mid k\in\Ad(\nu(t)^{-1})(K)\}\subset \mathbb{A}^1\times G.
\]
Let $\mathcal{K}$ be the closure of $\mathcal{K}^0$ in $\mathbb{A}^1\times G$.
It is a closed sub-group scheme of $\mathbb{A}^1\times G$.
Then $\mathcal{K}$ is flat over $\mathbb{A}^1$.
Since each fiber of $\mathcal{K}\to \mathbb{A}^1$ is a group scheme of finite type over $\C$, it is reducible~\cite{MR0206005}.
Hence it is smooth.
Therefore, $\mathcal{K}$ is smooth over $\mathbb{A}^1$.

Let $\Sigma$ be the restricted root system of $(\mathfrak{g},\mathfrak{a})$, $\Sigma^+$ the positive system corresponding to $\mathfrak{n}$, and $\mathfrak{g}_\alpha$ the restricted root space for $\alpha\in\Sigma$.
Then $\mathfrak{k}$ is spanned by $\mathfrak{m}$ and $\{X + \theta(X)\mid X\in \mathfrak{g}_\alpha,\alpha\in\Sigma^+\}$.
Since 
\[
	\Ad(\nu(t)^{-1})(X + \theta(X)) = t^{-\langle \nu,\alpha\rangle}(X + t^{2\langle \nu,\alpha\rangle}\theta(X))
\]
for $X\in \mathfrak{g}_\alpha$, the Lie algebra of $\Ad(\nu(t)^{-1})(K)$ is spanned by 
\[
	\text{$\mathfrak{m}$ and $\{X + t^{2\langle \nu,\alpha\rangle}\theta(X)\mid X\in \mathfrak{g}_\alpha, \alpha\in\Sigma^+\}$.}
\]
Hence the neutral component of $\mathcal{K}_0$ is $M^\circ N$ where $M^\circ$ is the neutral component of $M$.
Since $MK^\circ = K$ and $\Ad(\nu(t)^{-1})(M) = M$, we have $\mathcal{K}_0 = MM^\circ N = MN$.

Define $\widetilde{a}\colon \mathcal{K}^0\times_{\Gm}(\Gm\times X)\to K\times X$ (resp.~$\widetilde{m}\colon \mathcal{K}^0\times_{\Gm}(\Gm\times X)\to \Gm\times X$, $m\colon K\times X\to X$) by $\widetilde{a}((t,k),(t,x)) = (\Ad(\nu(t))k,\nu(t)x)$ (resp.~$\widetilde{m}((t,k),(t,x)) = (t,kx)$, $m(k,x) = kx$).
Then we have the following commutative diagrams
\[
\xymatrix{
	\mathcal{K}^0\times_{\Gm}(\Gm\times X) \ar[r]^(0.6){\widetilde{m}}\ar[d]_{\widetilde{a}} & \Gm \times X\ar[d]^a\\
	K\times X\ar[r]^(0.54)m & X,
}
\quad
\xymatrix{
	\mathcal{K}^0\times_{\Gm}(\Gm\times X) \ar[r]^(0.6){\pr_2}\ar[d]_{\widetilde{a}} & \Gm \times X\ar[d]^a\\
	K\times X\ar[r]^(0.54){\pr_2} & X.
}
\]
Let $\mathscr{F}\in \Perv_K(X)$.
Then $m^*\mathscr{F}\simeq \pr_2^*\mathscr{F}$.
Hence we get $\widetilde{a}^*m^*\mathscr{F}\simeq \widetilde{a}^*\pr_2^*\mathscr{F}$.
By the above diagrams, $\widetilde{m}^*a^*\mathscr{F}\simeq \pr_2^*a^*\mathscr{F}$.
Therefore $a^*(\mathscr{F})$ is $\mathcal{K}^0$-equivariant.
By Lemma~\ref{lem:limit of action}, $\Psi(\mathscr{F}) = R\psi(a^*(\mathscr{F}))$ is $\mathcal{K}_0 = MN$-equivariant .

\begin{rem}
Let $\Gamma$ be the quasi-inverse functor of $\Delta$.
Then $N$ acts on $\Gamma(\Psi(\mathscr{F}))$.
Moreover, this becomes a $(\mathfrak{g},N)$-module~\cite[9.1.1]{MR1021510}, namely, the infinitesimal action of $N$ coincides with the action of $\mathfrak{n}\subset \mathfrak{g}$.
We also have that $J(\Gamma(\mathscr{F}))$ is a $(\mathfrak{g},N)$-module.
Hence both $N$-actions have the same infinitesimal actions.
Since the action of $N$ is determined by its infinitesimal action, the $N$-action we defined above coincides with the $N$-action on $J(\Gamma(\mathscr{F}))$.
\end{rem}

%\bibliography{../../bunken,../../book}

\begin{thebibliography}{SGA7 II}

\bibitem[Cas80]{MR562655}
W.~Casselman, \emph{Jacquet modules for real reductive groups}, Proceedings of
  the International Congress of Mathematicians (Helsinki, 1978) (Helsinki),
  Acad. Sci. Fennica, 1980, pp.~557--563.

\bibitem[ENV04]{MR2096674}
M.~Emerton, D.~Nadler, and K.~Vilonen, \emph{A geometric {J}acquet functor},
  Duke Math. J. \textbf{125} (2004), no.~2, 267--278.

\bibitem[Kas89]{MR1021510}
Masaki Kashiwara, \emph{Representation theory and {$D$}-modules on flag
  varieties}, Ast\'erisque (1989), no.~173-174, 9, 55--109, Orbites unipotentes
  et repr{\'e}sentations, III.

\bibitem[Oor66]{MR0206005}
F.~Oort, \emph{Algebraic group schemes in characteristic zero are reduced},
  Invent. Math. \textbf{2} (1966), 79--80.

\bibitem[SGA7 II]{MR0354657}
\emph{Groupes de monodromie en g\'eom\'etrie alg\'ebrique. {II}}, Lecture Notes
  in Mathematics, Vol. 340, Springer-Verlag, Berlin, 1973, S{\'e}minaire de
  G{\'e}om{\'e}trie Alg{\'e}brique du Bois-Marie 1967--1969 (SGA 7 II),
  Dirig{\'e} par P. Deligne et N. Katz.

\end{thebibliography}
%\bibliographystyle{my_amsalpha}

\end{document}